\newcommand{\R}{\mathbb{R}}
\newcommand{\sst}{\subseteq}
\newcommand{\eps}{\varepsilon}
\newcommand{\dimH}{\dim_\mathrm{H}}
\newcommand{\tp}{\text{.}}
\newcommand{\tc}{\text{,}}
\def\thanks#1{\protected@xdef\@thanks{\@thanks
        \protect\footnotetext{#1}}}
\begin{document}

\title{A Gasket Construction of the Koch Snowflake and Variations}

\author[]{Robert C.\ Sargent\thanks{Email: rsargent@umd.edu}}
\affil[]{University of Maryland, College Park}

\date{February 2, 2025}

\maketitle

\vspace{-14pt}
\begin{abstract}
    We introduce a construction of the Koch snowflake that is not inherently six-way symmetrical, based on iteratively placing similar rhombi. This construction naturally splits the snowflake into four identical self-similar curves, in contrast to the typical decomposition into three Koch curves. Varying the shape of the rhombi creates a continuous family of new fractal curves with rectangular symmetry. We compute the Hausdorff dimension of the generalized curve and show that it attains a maximum at the original Koch snowflake.
\end{abstract}

\vspace{6pt}
\noindent
\textbf{Keywords:} Koch snowflake, Koch curve, fractals, Hausdorff dimension, fractal dimension

\vspace{6pt}
\noindent
\textbf{MSC2020:} 28A80, 28A78

\vspace{6pt}
\noindent
Thanks to Dr.\ Lawrence Washington (University of Maryland) for mentorship, encouragement, and feedback. Thanks to Dr.\ Yann Demichel (Universit\'e Paris Nanterre) for feedback and advice.

\section{Introduction}\label{sec:intro}
The standard construction of the Koch snowflake defines the fractal to be the limit of a sequence of curves. This paper presents a new construction that is similar to those of the Sierpi\'nski gasket and Apollonian gasket. We refer to this construction as a ``gasket construction'' because it starts with a region that is viewed as ``empty space'' into which polygons are inserted, leaving smaller empty spaces, which are filled by more polygons, and so on. To construct the Koch snowflake, the initial region is a horizontally oriented rhombus, and the polygons that are inserted are vertical and horizontal rhombi, with the orientation alternating at each iteration.
\begin{figure}[H]
    \centering
    \includegraphics[width=0.85\linewidth]{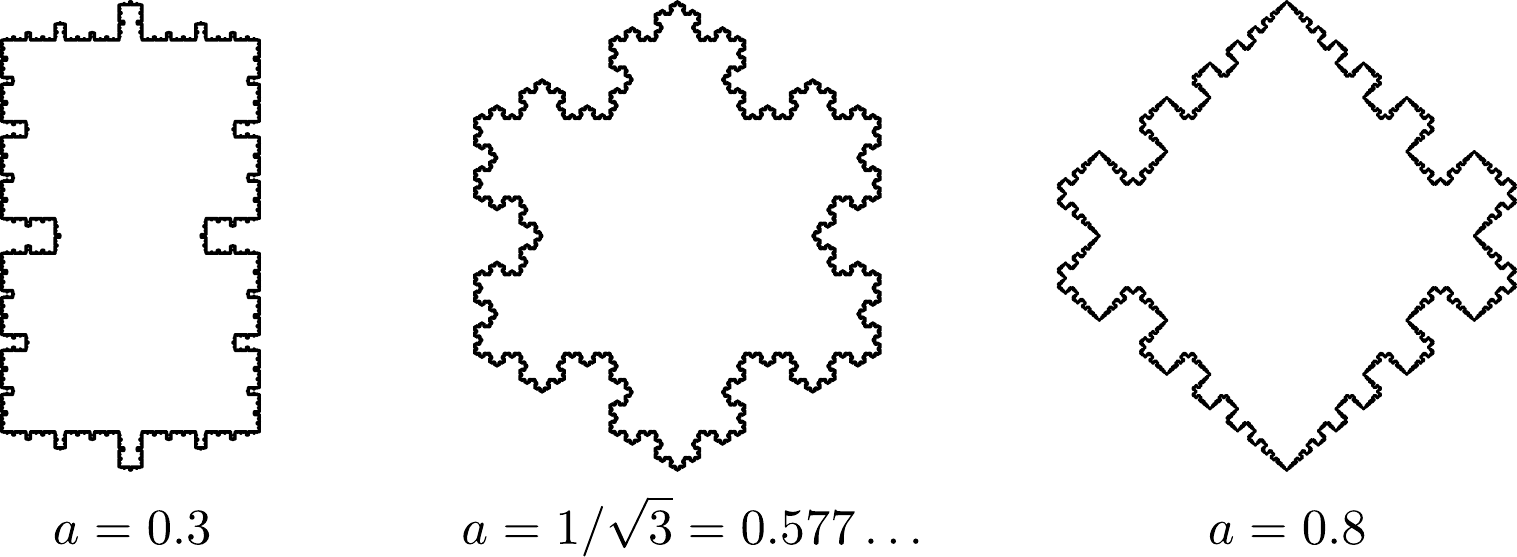}
    \caption{The fractal created by the gasket construction at different values of $a$, where $a$ is the aspect ratio of the rhombi in the construction.}
    \label{fig:fig1}
\end{figure}

%(\cite{falconer} pp.\ xx--xxii)
The standard Koch snowflake construction starts with an equilateral triangle (\cref{fig:kochorig}). Triangles of one-third the size are placed on the center of each edge of the original triangle, creating a twelve-sided polygon. Triangles one-third the size of those are placed on each edge of this polygon, and so on. The Koch snowflake is the limit of this sequence of polygonal curves. The Koch snowflake is contrasted with the Koch \emph{curve}, which is constructed in the same manner as the snowflake, but starts from a line segment instead of a triangle. Thus, the Koch snowflake is the union of three identical Koch curves, each starting from one side of the initial triangle.
\begin{figure}[H]
    \centering
    \includegraphics[width=0.9\linewidth]{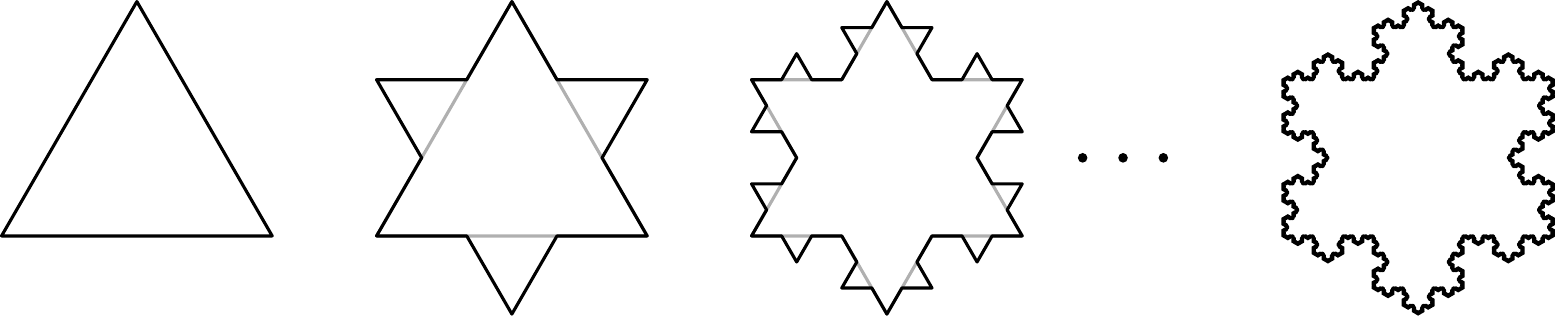}
    \caption{The standard construction of the Koch snowflake.}
    \label{fig:kochorig}
\end{figure}

Many generalizations of the Koch curve exist, including a family described by von Koch himself \cite{koch1906}, two years after he introduced the curve in 1904 \cite{ogkoch}. One generalization varies the number of sides of the polygons added and the size of the polygons added \cite{nc2,nc}. Some vary the number of polygons added at each step \cite{rand,joel}. Others vary where along each segment the new geometry is added \cite{koch1906,sys}, as well as the angles in the segments added \cite{var,kochawave}. Some generalizations take the snowflake into the third dimension \cite{3d,nature,joel}.

What these examples have in common is that they are based on the standard construction of the Koch snowflake. In particular, they reflect the standard decomposition of the snowflake into three Koch curves, either directly generalizing the Koch curve or supporting an analogous decomposition. This causes the resulting fractals to have symmetry analogous to the rotational symmetry of the Koch snowflake. By contrast, the gasket construction introduced here generalizes the \emph{rectangular} symmetry of the snowflake. The construction naturally gives a decomposition of the Koch snowflake into four congruent self-similar curves, as opposed to the typical three (\cref{fourdecomp}). This method creates a family of curves that have rectangular symmetry but not three-fold or six-fold rotational symmetry. The original Koch snowflake, which does have six-fold symmetry, is a member of this family, but this symmetry only appears in the limit of the construction.

\begin{figure}[H]
    \centering
    \includegraphics[width=0.7\linewidth]{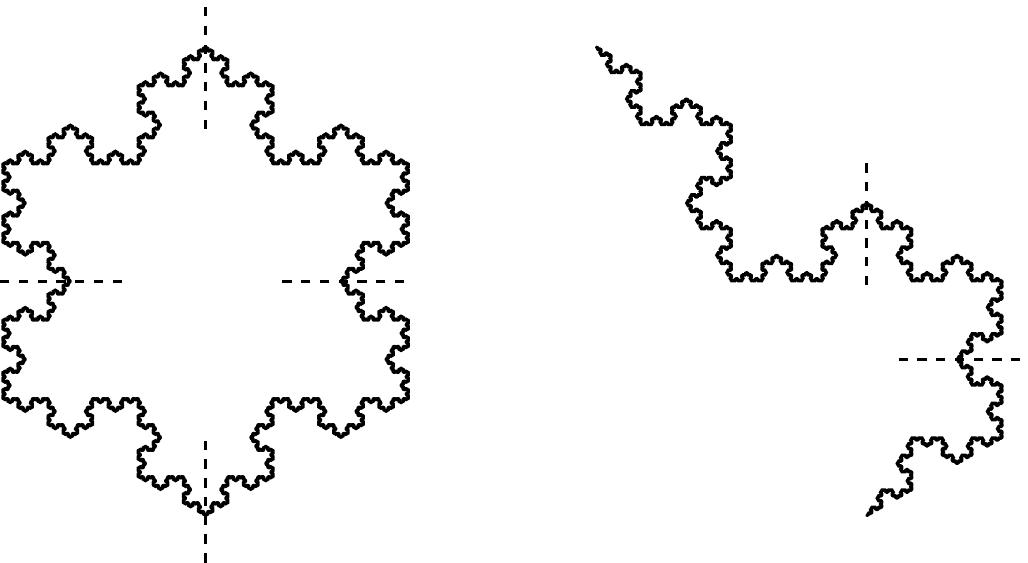}
    \caption{The Koch snowflake as the union of four congruent self-similar curves. Each of the four curves decomposes into three parts similar to the whole.}
    \label{fourdecomp}
\end{figure}

\Cref{sec:gasket} rigorously defines the fractal as the intersection of a sequence of sets, then proves that the construction yields a Jordan curve that depends continuously on the aspect ratio of the rhombi. \Cref{sec:koch} proves that the gasket construction does result in the same curve as the standard Koch snowflake construction. \Cref{sec:hausdorffdim} computes the Hausdorff dimension of the fractal as a function of the aspect ratio, using standard fractal-geometric tools. This includes the surprising result that the Hausdorff dimension is maximized when the aspect ratio is $\sqrt{3}:1$, exactly when the construction yields the standard Koch snowflake.

\section{The gasket construction}\label{sec:gasket}
The gasket construction starts with a horizontally oriented rhombus of aspect ratio $1:a$, with $a \in (0,1)$ fixed. This rhombus has height 2 and width $2/a$, and is centered at the origin. The interior of this rhombus is viewed as ``empty space'' into which rhombi are inserted at each iteration. All rhombi in the construction are either horizontal ($1:a$) or vertical ($a:1$). Next, the largest possible vertical rhombus is placed within the first rhombus, splitting the empty space into two polygons; this completes iteration 1. The polygons that make up this space are called \emph{empty polygons}. (We call these polygons ``empty'' because they are the ones we fill with rhombi in future iterations. In figures, we typically represent these empty polygons as shaded regions.) At iteration 2, each of the two empty polygons have the largest possible horizontal rhombi placed into them, leaving four empty polygons. In general, at iteration $k$, a rhombus is placed into each of the $2^{k-1}$ empty polygons, leaving $2^k$ empty polygons, with the orientation of the rhombi alternating with the parity of $k$. (For convenience, we do not consider the initial horizontal rhombus to be an empty polygon.)
\begin{figure}[H]
    \centering
    \includegraphics[width=0.95\linewidth]{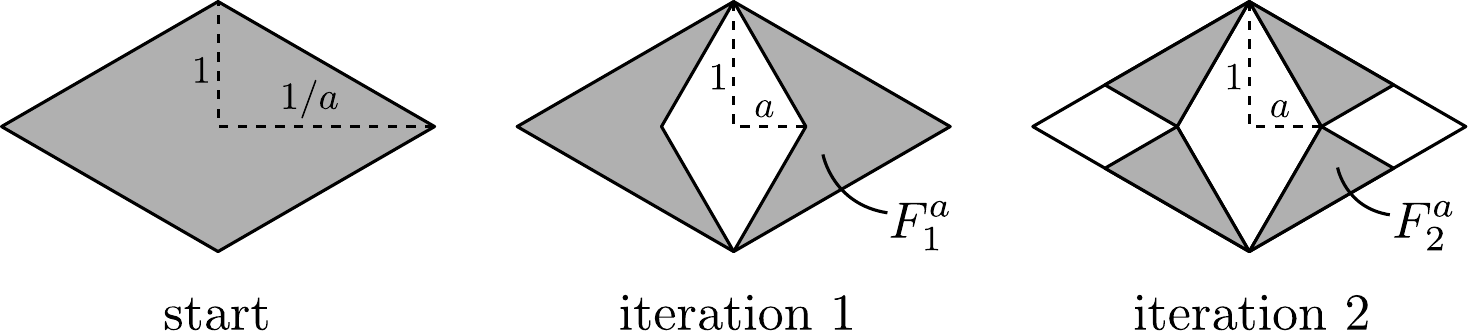}
    \caption{The first two iterations of the gasket construction. The shaded regions in iterations 1 and 2 are the empty polygons.}
    \label{fig:iter2}
\end{figure}
\begin{figure}[H]
    \centering
    \includegraphics[width=0.95\linewidth]{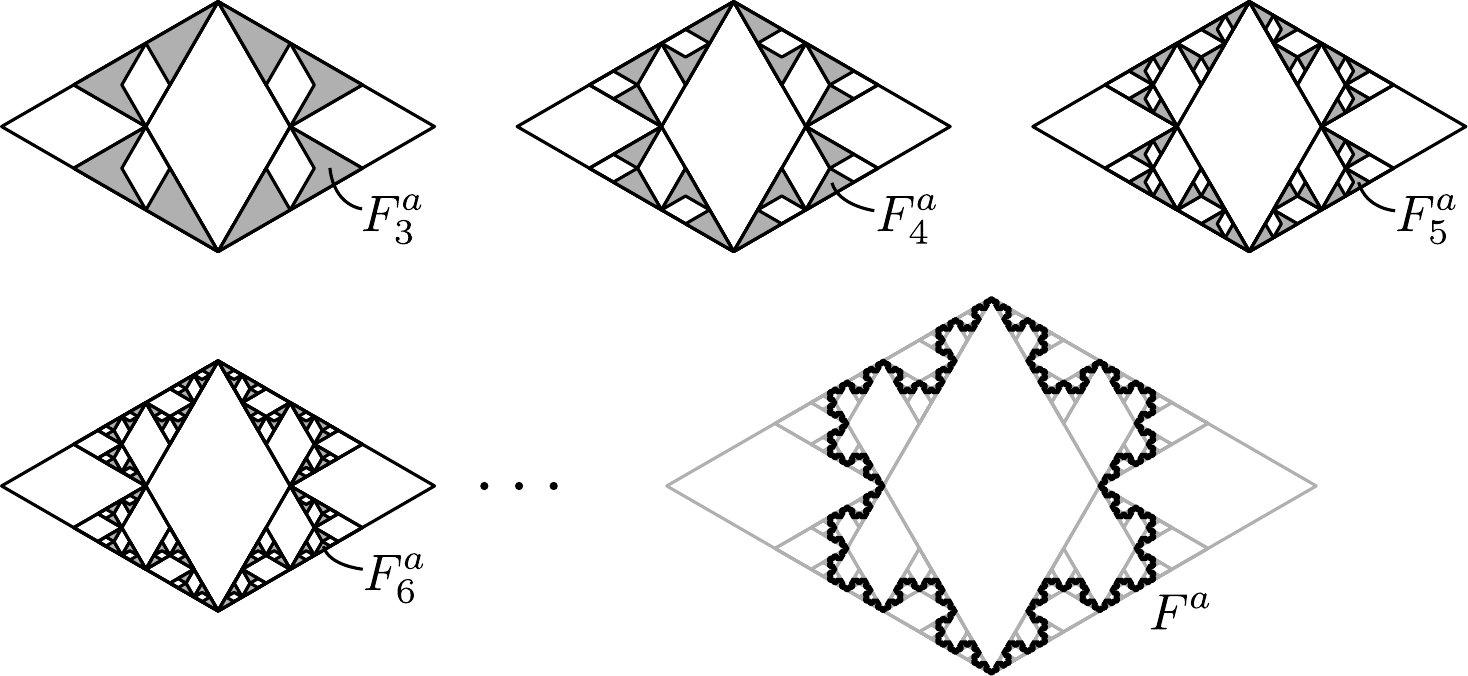}
    \caption{The next few iterations of the gasket construction and the final result.}
    \label{fig:gasket}
\end{figure}
For each $k$, let $F^a_k$ denote the union of all the (closed, filled) empty polygons at iteration $k$. We define the final fractal to be
\[F^a = \bigcap_{k=1}^\infty F^a_k\tp\]
Since $F^a$ is the intersection of nested nonempty compact sets, it is nonempty and compact. However, it is not yet clear that $F^a$ is a curve. To prove this, we construct a Jordan curve $f^a$, then prove that the image of $f^a$ equals $F^a$ (\cref{contfamily}).

It is easy to see that there are only two types (up to similarity) of empty polygons that arise, a right triangle and a dart. However, some of these similarities involve 90-degree rotations, interchanging the roles of the vertical and horizontal rhombi. It ends up being more convenient to focus only on the even-numbered iterations, which prevents this issue.

\begin{defn}\label{contact}
    A \emph{contact point} is a point where a vertical rhombus touches a horizontal rhombus. (For example, there are two contact points at iteration 1 and four contact points at iteration 2.)
\end{defn}

\begin{lem}\label{types}
    Within the \emph{even-numbered} iterations, the following hold:
    \begin{itemize}
        \item[(a)] The are only two types of empty polygons up to similarity. 
        \item[(b)] These similarities involve only translations, scaling, and reflections across the $x$ and $y$ axes. These similarities respect contact points and respect the rhombi that will be inserted in future iterations.
        \item[(c)] Each empty polygon contains exactly two contact points. In each iteration, the points where distinct empty polygons intersect are precisely the contact points.
    \end{itemize}   
\end{lem}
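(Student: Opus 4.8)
The plan is to prove (a), (b), (c) simultaneously by induction over the even iterations, advancing two iterations at a time (the odd insertion at level $2k+1$ followed by the even insertion at level $2k+2$). The backbone is to single out two \emph{model} empty polygons --- a model right triangle $\Delta^{*}$ and a model dart $D^{*}$ --- each carrying the data of its two distinguished contact points and of the entire nested sequence of rhombi that the construction inserts into it, and then to prove one self-similar \emph{replacement rule}: carrying out the next two iterations inside a copy of $\Delta^{*}$ (resp.\ $D^{*}$) produces exactly four empty polygons, each a copy of $\Delta^{*}$ or $D^{*}$ under a similarity that is a translation composed with a positive homothety and possibly one or both of the reflections $x\mapsto-x$ and $y\mapsto-y$ --- in particular, never a $90^{\circ}$ rotation.

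For the base case I would fix coordinates with the initial rhombus having vertices $(0,\pm1)$ and $(\pm1/a,0)$ and compute iterations $1$ and $2$ by hand. Maximizing the scale of an inscribed similar rhombus against the linear constraints coming from the bounding edges puts the iteration-$1$ vertical rhombus at vertices $(0,\pm1)$, $(\pm a,0)$, so the two iteration-$1$ empty polygons are darts, reflections of one another across the $y$-axis, each containing the contact points $(0,\pm1)$. A second such optimization places the left and right vertices of the iteration-$2$ horizontal rhombus inside the right dart at $(a,0)$ and $(1/a,0)$; this rhombus cuts the dart into two right triangles meeting only at the new contact point $(a,0)$. So iteration $2$ is four congruent right triangles related by the axis reflections --- the base case, with no dart yet present (consistent with reading (a) as ``at most two types''). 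I would take a rescaled copy of this triangle as $\Delta^{*}$ and the iteration-$1$ dart as $D^{*}$.

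The replacement rule is the heart of the matter and the step I expect to be the main obstacle. Inside $\Delta^{*}$ one inserts the largest vertical rhombus, splitting it into two pieces, then inserts the largest horizontal rhombus into each; this requires solving for the extremal inscribed rhombi explicitly (once more a linear optimization against the handful of boundary lines of the host), checking that the four children are similar to $\Delta^{*}$ or $D^{*}$, and verifying that each similarity involves no rotation. That last point is precisely where restricting to even iterations is needed: a single insertion interchanges the horizontal and vertical rhombi and so would force a $90^{\circ}$ rotation when normalizing, whereas composing the odd and the even insertions restores the orientation. The same computation must be run with $D^{*}$ in place of $\Delta^{*}$, and the induction closes only if the combinatorics are tracked faithfully --- which children are of which type, and how the two marked contact points and the nested future rhombi are transported. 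This is the only genuinely computational part and the one most prone to bookkeeping errors.

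Granting the replacement rule, the three conclusions follow. Part (b) is immediate, since every similarity that arises is by construction a translation together with a positive homothety and (possibly) the axis reflections, and it carries a model polygon \emph{with} its marked contact points and its whole nested sequence of future rhombi, hence respects contact points and the rhombi to be inserted later. For (a), the induction gives that every empty polygon at an even iteration is an axis-reflected homothetic image of $\Delta^{*}$ or $D^{*}$, so at most two similarity types occur, both among $\{$right triangle, dart$\}$. For (c), ``exactly two contact points per polygon'' holds for $\Delta^{*}$ and $D^{*}$ by inspection and is preserved under the similarities; and that distinct empty polygons meet precisely at contact points is read off directly from the explicit local geometry supplied by the replacement rule when the two polygons are children of a common level-$(2k-2)$ polygon, while for two polygons lying in different level-$(2k-2)$ polygons the intersection is contained in the intersection of those parents, already a single contact point by induction. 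Thus (a)--(c) hold at level $2(k+1)$, and the induction is complete.
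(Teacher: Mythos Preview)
Your proposal is correct and follows essentially the same route as the paper: induction over even iterations with a two-step replacement rule applied to two model shapes (the paper calls your right triangle $\Delta^{*}$ a \emph{wedge}), with the base case at iteration~2 consisting of four wedges and the induction step checked by inspecting the local geometry inside each model. The paper records the explicit child counts --- a wedge yields three wedges and one dart, a dart yields two of each --- and otherwise relegates the verification of (b) and (c) to a figure, whereas you spell out the bookkeeping more carefully; but the strategy is the same.
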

\begin{proof}
    Proceed by induction on the iteration $k$ ($k$ even). We call the two types (pictured in \cref{fig:types}) the \emph{wedge} and the \emph{dart}. The base case, iteration 2, consists of four wedges. For the induction step, suppose that the lemma holds at iteration $k$, examine each type, and see what happens at iteration $k+2$. Going from $k$ to $k+2$ involves placing vertical rhombi, then horizontal rhombi. One wedge at $k$ produces three wedges and a dart at $k+2$; one dart at $k$ produces two wedges and two darts at $k+2$.
    \begin{figure}[H]
        \centering
        \includegraphics[width=0.8\linewidth]{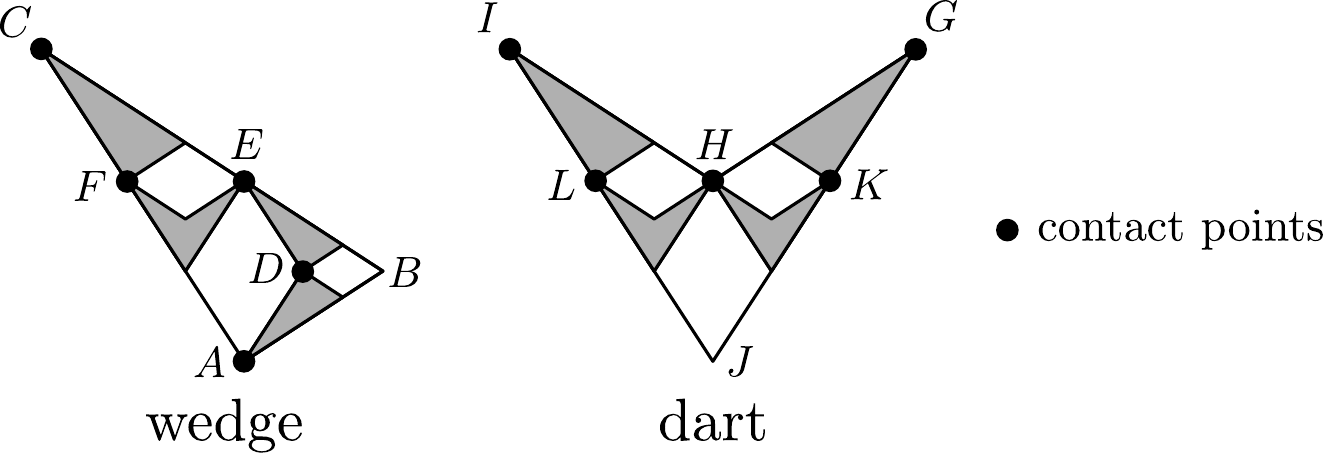}
        \caption{The two types of empty polygons. $ABC$ and $GHIJ$ are empty polygons at iteration $k$; the shaded polygons are empty polygons at $k+2$. The contact points at $k$ are $A,C,G,I$; the contact points at $k+2$ include those and $D,E,F,H,K,L$.}
        \label{fig:types}
    \end{figure}
    Points (b) and (c) for the induction step can be visually checked. That these polygons are truly similar follows from the fact that the slopes of all line segments involved are $\pm a$ or $\pm 1/a$.
\end{proof}

\begin{lem}\label{bounds}
    The following bounds hold:
    \begin{itemize}
        \item[(a)] Let $w_a$ (resp.\ $d_a$) represent the diameter of a wedge (resp.\ dart) of height $1$. Then $w_a,d_a < 2/a$. (The diameter of a nonempty set $E \sst \R^2$ is $\sup\{d(x,y) \mid x,y \in E\}$.)
        \item[(b)] If a wedge or dart has height $1$, then after two iterations, the heights of the resulting wedges and darts are less than $(1+a)/2$.
    \end{itemize}
\end{lem}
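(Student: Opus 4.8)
The plan is to reduce both parts to explicit computations with the two shapes from \cref{types}. First I would identify, up to the allowed similarities, the wedge and dart of height $1$. Carrying out the construction through the first few iterations (cf.\ \cref{fig:iter2}, \cref{fig:gasket}) shows that the height-$1$ wedge is the right triangle with vertices $(0,1)$, $\bigl(\tfrac{1+a^2}{2a},\tfrac{1-a^2}{2}\bigr)$, $(a,0)$, and that the height-$1$ dart is the arrowhead with vertices $(-a,0)$, $(0,1)$, $(a,0)$, $(0,a^2)$, reflex at $(0,a^2)$. The one nonroutine point is confirming that the ``largest possible'' rhombus inserted at each stage is the one claimed; this follows either from the standard averaging argument for inscribing a maximal similar copy in a convex region, or by direct inspection of the linear inequalities cutting out each polygon.

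For (a), the diameter of each shape is the largest distance between two of its vertices, so one simply reads off $w_a=\tfrac{(1+a^2)^{3/2}}{2a}$ (the hypotenuse of the wedge) and $d_a=\max\!\bigl(\sqrt{1+a^2},\,2a\bigr)$ (for the dart). Then $a\in(0,1)$ gives $w_a<\tfrac{\sqrt2}{a}<\tfrac2a$, and, since $2a<\tfrac2a$ and $\sqrt{1+a^2}<\sqrt2<\tfrac2a$, also $d_a<\tfrac2a$.

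For (b) I would run two iterations on each shape, tracking only the heights ($y$-extents). On the wedge $W$ of height $1$: the largest inscribed vertical rhombus is centered at $\bigl(a,\tfrac{1-a^2}{2}\bigr)$, with lower vertex $(a,0)$ and upper vertex $(a,1-a^2)$ on the hypotenuse; removing it leaves a triangle of height $\tfrac{1+a^2}{2}$ together with a dart of height $1-a^2$ which, up to translation, is the iteration-$1$ dart of \cref{fig:iter2} scaled by $\tfrac{1-a^2}{2}$. At the next iteration every empty polygon inside the triangle has height $\le\tfrac{1+a^2}{2}$, while those inside the dart correspond under that similarity to the (scaled) iteration-$2$ wedges and hence have height $\tfrac{1-a^2}{2}$; so every resulting polygon has height $\le\tfrac{1+a^2}{2}$. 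On the dart $\Delta$ of height $1$: the largest inscribed vertical rhombus has its two on-axis vertices at the tip $(0,1)$ and the reflex vertex $(0,a^2)$, and $\Delta$ minus it is two congruent triangles of height $\tfrac{1+a^2}{2}$; so every resulting polygon again has height $\le\tfrac{1+a^2}{2}$. Since $a^2<a$ on $(0,1)$ we have $\tfrac{1+a^2}{2}<\tfrac{1+a}{2}$, which gives (b).

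I expect the real work to be bookkeeping rather than estimation: keeping straight which descendant polygons are wedges and which are darts (recalling that the odd-numbered intermediate polygons swap the roles of the horizontal and vertical rhombi, which is exactly why \cref{types} is stated for even iterations) and pinning down the ``largest rhombus'' claims. Once the two shapes and the first subdivision step of each are on paper, (a) and (b) collapse to the elementary inequalities above, and ultimately to $a^2<a$ and $1-a^2<1+a$ for $a\in(0,1)$.
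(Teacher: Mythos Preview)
Your proposal is correct and follows essentially the same route as the paper: write down explicit coordinates for the height-$1$ wedge and dart, read off $w_a=\tfrac{(1+a^2)^{3/2}}{2a}$ and $d_a=\max\{2a,\sqrt{1+a^2}\}$, and then check the elementary inequalities on $(0,1)$. The only noticeable difference is in (b): the paper carries the computation all the way to iteration $k+2$ and records the exact descendant heights $a^2$ and $\tfrac{1-a^2}{2}$, whereas you stop at the intermediate iteration $k+1$ and use the coarser bound $\tfrac{1+a^2}{2}$ (the height of the odd-iteration triangle) together with the trivial containment of the $k{+}2$ polygons in the $k{+}1$ ones. Since $\tfrac{1+a^2}{2}<\tfrac{1+a}{2}$ strictly on $(0,1)$, your shortcut is perfectly adequate for the lemma; the paper's sharper values are not needed anywhere later.
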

\begin{proof}
    Straightforward geometric calculations show that $w_a = \dfrac{(1+a^2)^{3/2}}{2a}$ and $d_a = \max\{2a, \sqrt{1+a^2}\}$. Since $0 < a < 1$, (a) follows. Point (b) follows from similar calculations that show that the heights of the resulting empty polygons are $a^2$ or $(1-a^2)/2$ (\cref{fig:bounds}), which are less than $(1+a)/2$ since $0 < a < 1$.
\end{proof}
\begin{figure}[H]
    \centering
    \includegraphics[width=0.75\linewidth]{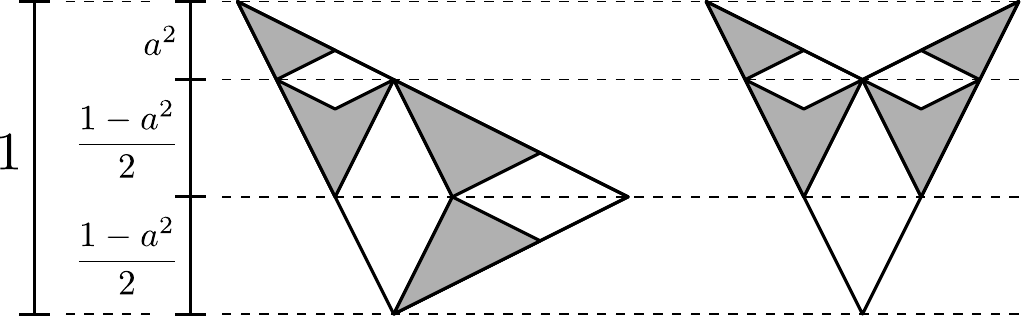}
    \caption{The relationships between the heights of empty polygons in consecutive even iterations.}
    \label{fig:bounds}
\end{figure}

This lemma is used in the proof of \cref{contfamily} to give a useful bound for the diameters of empty polygons.

\begin{defn}
    A \emph{Jordan curve} is a continuous function $\gamma : [0,1] \to \R^2$ such that $\gamma(0) = \gamma(1)$ and $\gamma$ is injective on $[0,1)$. (That is, a Jordan curve is a simple closed plane curve.)
\end{defn}

\begin{thm}\label{contfamily}
    There is a continuous function $f : (0,1) \times [0,1] \to \R^2$ such that for each $a \in (0,1)$, the map $t \mapsto f(a,t)$ is a Jordan curve whose image is $F^a$, the set created by the gasket construction. (In other words, the gasket construction yields a continuous family of Jordan curves.) Furthermore, for all $a$, the set $F^a$ is the closure of $C^a$, where $C^a$ denotes the set of all contact points arising in the construction of $F^a$.
\end{thm}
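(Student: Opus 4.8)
The plan is to parametrize the curve using the contact points as landmarks: assign dyadic parameters to them in their cyclic order around the construction, show the resulting partial parametrizations converge uniformly (via the diameter bounds of \cref{bounds}), and then identify the image and verify simplicity. By \cref{types}(c), at each even iteration $k$ the $2^k$ empty polygons form a single cyclic chain, consecutive polygons meeting in exactly one contact point and each contact point lying in exactly two (adjacent) empty polygons; this yields $2^k$ distinct contact points $p^k_0,\dots,p^k_{2^k-1}$ in cyclic order with $P^k_j$ bounded by $p^k_j$ and $p^k_{j+1}$ (indices mod $2^k$). Passing from level $k$ to $k+2$ splits each $P^k_j$ into four empty polygons forming a subchain from $p^k_j$ to $p^k_{j+1}$, inserting three new contact points inside $P^k_j$; assigning $p^k_j$ the parameter $j/2^k$ makes this refinement consistent, so every contact point acquires a well-defined parameter in the dense set $D=\{\,j/2^k : k\text{ even}\,\}\subseteq[0,1]$. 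Define $f(a,t)$, for $t\in D$, to be the contact point of parameter $t$, and $f(a,0)=f(a,1)=p^2_0$. Writing $I^k_j=[j/2^k,(j+1)/2^k]$, the fundamental containment is $f(a,I^k_j\cap D)\subseteq P^k_j$, since the dyadics in $I^k_j$ are exactly the parameters of the contact points produced inside $P^k_j$ by later subdivisions.

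\textbf{Uniform limit and joint continuity.} \Cref{bounds} shows the diameters of the level-$k$ empty polygons are bounded by some $\delta_k(a)\le C(a)\bigl(\tfrac{1+a}{2}\bigr)^{(k-2)/2}$, which tends to $0$ and is bounded uniformly for $a$ in compact subsets of $(0,1)$. If $s,t\in D$ with $|s-t|\le 1/2^k$ then $f(a,s),f(a,t)$ lie in $P^k_j\cup P^k_{j+1}$ for some $j$, so $|f(a,s)-f(a,t)|<2\delta_k(a)$; hence $t\mapsto f(a,t)$ is uniformly continuous on $D$ and extends to a continuous loop $[0,1]\to\R^2$ with $f(a,0)=f(a,1)$. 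Each contact point has coordinates that are rational functions of $a$ on $(0,1)$ (every relevant slope being $\pm a$ or $\pm1/a$), so $a\mapsto f(a,t)$ is continuous for fixed $t\in D$; combined with the $a$-uniform modulus of continuity in $t$, a three-$\varepsilon$ argument gives joint continuity of $f$ on $(0,1)\times[0,1]$.

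\textbf{The image is $F^a=\overline{C^a}$.} A contact point lies in every $F^a_k$ (before it appears it is in the empty region, and afterwards it lies on boundaries of empty polygons), so $C^a\subseteq F^a$, and $F^a$ being closed gives $\overline{C^a}\subseteq F^a$. Conversely, any $x\in F^a$ lies in some empty polygon at each even level and hence within $\delta_k(a)$ of a contact point, so $x\in\overline{C^a}$; thus $F^a=\overline{C^a}$. Since $f(a,D)=C^a$ and $f(a,\cdot)$ is continuous with compact domain, $f(a,[0,1])=\overline{f(a,D)}=\overline{C^a}=F^a$.

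\textbf{Injectivity — the main obstacle.} Regard $[0,1]$ as the circle $\R/\Z$. For $x$ in the image and $k$ even, $x$ lies (by \cref{types}(c)) either in one empty polygon $P^k_j$ or in two adjacent ones $P^k_j$ and $P^k_{j+1}$ meeting at the contact point $x$; together with $f(a,I^k_i)\subseteq P^k_i$, every parameter mapped to $x$ then lies in $I^k_j$ in the first case and in $I^k_j\cup I^k_{j+1}$ in the second, an arc of circle-length at most $2/2^k$. Letting $k\to\infty$ forces $f(a,\cdot)^{-1}(x)$ to be a single point of $\R/\Z$ — that is, a single point of $[0,1)$, except that $f(a,\cdot)^{-1}(f(a,0))$ may be $\{0,1\}$. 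Hence $f(a,\cdot)$ is injective on $[0,1)$, i.e.\ a Jordan curve. I expect the real work to be here: pinning down the containment $f(a,I^k_i)\subseteq P^k_i$ and extracting the precise incidence structure of empty polygons and contact points from \cref{types} (including that distinct empty polygons meet only at contact points and that each contact point lies in exactly two adjacent ones), after which the shrinking-diameter estimate closes everything out.
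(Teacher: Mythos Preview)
Your proposal is correct and follows essentially the same approach as the paper: both assign dyadic parameters to the contact points of the even iterations, use the diameter bounds of \cref{bounds} to obtain a uniformly convergent family, establish the containment $f^a(I^k_j)\subseteq P^k_j$ to prove injectivity, and identify the image with $\overline{C^a}=F^a$. The only cosmetic difference is that the paper builds explicit polygonal approximants $f_k^a$ (so joint continuity in $(a,t)$ comes directly from the uniform limit theorem), whereas you define $f$ on dyadics and extend by uniform continuity, then patch in the $a$-dependence via a three-$\varepsilon$ argument; your preimage-shrinking formulation of injectivity is the contrapositive of the paper's direct argument.
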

\begin{proof}
    We prove the theorem geometrically, in a similar fashion to \cite{geoproof}.
    
    We define a sequence of functions $f_1, f_2, \dots$ that converges to the desired $f$. It is helpful to think of each of $f, f_1, f_2, \dots$ as a family of curves indexed by $a$. To this end, we think of $a \in (0,1)$ as fixed, and let $f^a : [0,1] \to \R^2$ denote the curve $t \mapsto f(a,t)$. Similarly, we let $f_k^a$ denote the curve $t \mapsto f_k(a,t)$, and view ourselves to be defining $f^a$ as the limit of the curves $f_k^a$. (As before, the parameter $a$ is the aspect ratio of the rhombi involved in the gasket construction.)
    
    As before, we work using the even iterations, so the curve $f_k^a$ is based on the construction at iteration $2k$. The curves $f_1^a,f_2^a,\dots$ are defined inductively, with more detail added each time. First, $f_1^a$ maps the inputs $0,\, 1/4,\, 2/4,\, 3/4,\, 1$ to the four contact points at iteration 2 (since these are closed curves, $f_k^a(0) = f_k^a(1)$ always). This function maps each of the subintervals $[0,1/4],\,[1/4,2/4],\, \dots$ to a constant-speed path (see \cref{fig:curves12}) within one of the four empty polygons at this iteration. Next, $f_2^a$ maps $0,\, 1/16,\,2/16,\, \dots,\, 1$ to the 16 contact points at iteration 4, and maps each of the subintervals $[0,1/16],\,[1/16,2/16],\,\dots$ into one of the 16 empty polygons at this iteration. The curve $f_2^a$ equals $f_1^a$ at the inputs $0,\, 1/4,\, 2/4,\, 3/4,\, 1$, and $f_2^a$ maps each of $[0,1/4],\,[1/4,2/4],\, \dots$ into the same one of iteration 2's empty polygons that $f_1^a$ does.
    \begin{figure}[H]
        \centering
        \includegraphics[width=0.95\linewidth]{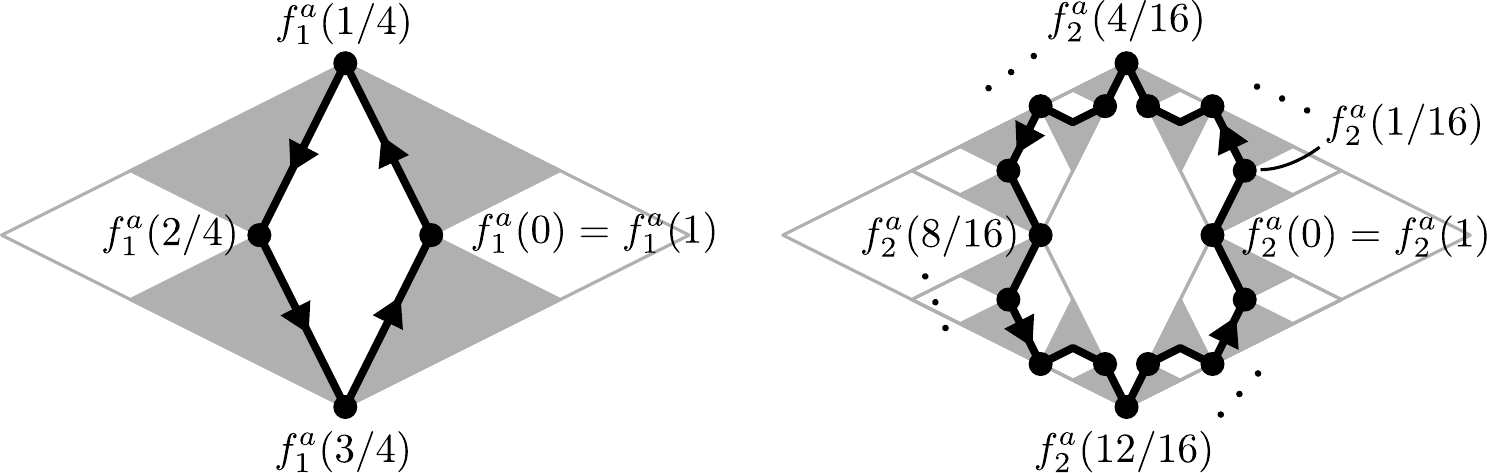}
        \caption{The curves $f_1^a$ and $f_2^a$.}
        \label{fig:curves12}
    \end{figure}
    This process continues inductively, with $f_k^a$ mapping $0,\,1/2^{2k},\,2/2^{2k},\,\dots,\,1$ to the $2^{2k}$ contact points of iteration $2k$. To define this inductive step explicitly, it suffices to detail what happens inside each empty polygon. The inductive step is defined so that, for any $k$ and $l$, the path that $f_k^a$ takes through a wedge of iteration $2k$ is similar to the path that $f_l^a$ takes through a wedge of iteration $2l$ (though the direction may flip). The corresponding statement for darts also holds. Thus, there are only two cases to describe, the wedge and the dart (\cref{fig:inductivestep}).
    \begin{figure}[H]
        \centering
        \includegraphics[width=0.85\linewidth]{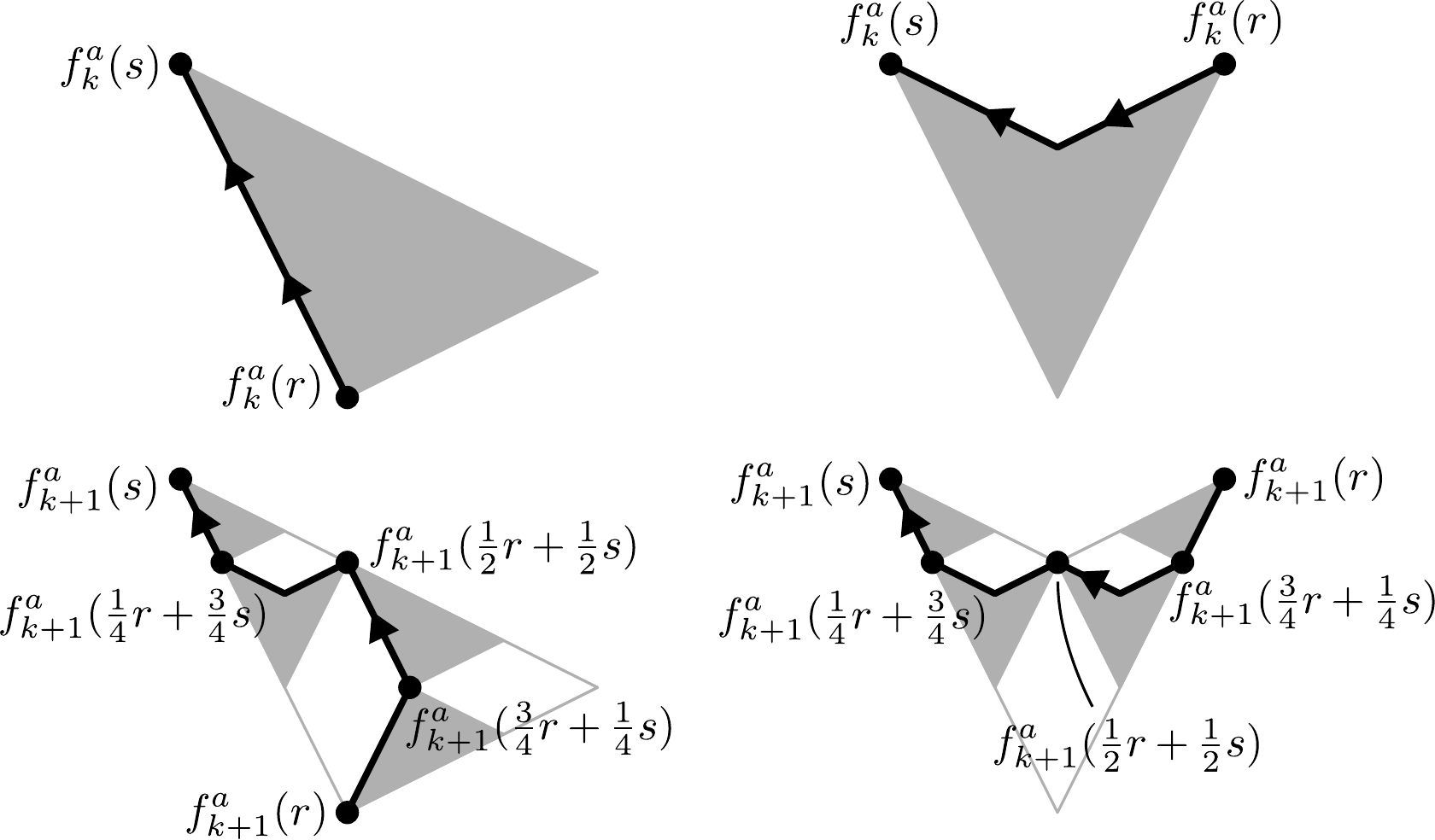}
        \caption{How $f_{k+1}^a$ is defined from $f_k^a$. Here $r,s$ are adjacent elements of $\{0, 1/2^{2k},\dots,1\}$, with either $r < s$ or $s < r$. The arrows in the figure represent the direction of the curves when $r < s$; the direction is reversed when $s < r$.}
        \label{fig:inductivestep}
    \end{figure}
    It is clear from the construction that each $f_k$ is continuous, i.e., that $f_k^a(t)$ is continuous in $(a,t)$. We now need to prove that the $f_k$ converge to a continuous function $f$. To do this, it suffices to prove that for every $\eps \in (0,1/2)$, the $f_k$ converge to a continuous $f$ on the domain $(\eps,1-\eps) \times [0,1]$; from this it follows that the $f_k$ converge to a continuous $f$ on all of $(0,1)\times[0,1]$. Fix $\eps \in (0,1/2)$, and define the constants $B_\eps = 2/\eps$, $r_\eps = 1-\eps/2$. By \cref{bounds}, for all $a \in (\eps,1-\eps)$, the diameters of wedges/darts of height $1$ are less than $2/a$, which is less than $2/\eps = B_\eps$. Furthermore, if $h$ is the maximum height of an empty polygon at iteration $2k$, then the heights of empty polygons at $2(k+1)$ are less than $\dfrac{1+a}{2}h$, which is less than $\dfrac{1+(1-\eps)}{2}h = r_\eps h$. These facts, and the fact that the four wedges at iteration 2 have height $1$, imply that the diameter of any empty polygon at iteration $2k$ is less than $B_\eps\, r_\eps^{k-1}$. This quantity tends to 0 as $k$ grows.
    
    Now, consider any $k,l$ with $k \leq l$ and any $t \in [0,1]$. The point $f_k^a(t)$ is in one of the empty polygons at iteration $2k$, and by construction, $f_l^a(t)$ is in this same empty polygon. Then $d\big(f_k^a(t),\, f_l^a(t)\big)$ is at most the diameter of this polygon, which is less than $B_\eps\,r_\eps^{k-1}$. Thus, the sequence $f_1,f_2,\dots$, where the functions are restricted to the domain $(\eps,1-\eps) \times [0,1]$, is uniformly Cauchy. This means the $f_k$ converge uniformly to a function $f$ on this domain. By the uniform limit theorem, $f$ is continuous on $(\eps,1-\eps) \times [0,1]$. Since $\eps \in (0,1/2)$ was arbitrary, this means the $f_k$ converge to a continuous $f$ on all of $(0,1) \times [0,1]$.
    
    We now prove that $f^a$ is a Jordan curve for all $a$. We have $f^a(0) = f^a(1)$ by construction, so it remains to show that $f^a$ is injective on $[0,1)$. Consider any $r,s \in [0,1)$ with $r < s$. Pick $k$ large enough that $s-r > 2/2^{2k}$ and $s < 1 - 1/2^{2k}$. Then there is some $i \in \{0,1,\dots,2^{2k}\}$ such that $r < i/2^{2k} < (i+1)/2^{2k} < s$. We know that $f_k^a$ maps each of the subintervals $[0,1/2^{2k}],\,[1/2^{2k},2/2^{2k}],\,\dots$ into a distinct empty polygon of iteration $2k$, and two distinct empty polygons intersect only if they correspond to adjacent subintervals (with the exception of the pair $[1-1/2^{2k},1]$, $[0,1/2^{2k}]$\,). Since $r$ and $s$ are in non-adjacent subintervals and $s \notin [1-1/2^{2k},1]$, $f_k^a$ maps $r$ and $s$ into disjoint empty polygons. Since $f_l^a$ maps $r$ and $s$ into these same empty polygons for all $l \geq k$, and these polygons are closed, $f^a$ also maps $r$ and $s$ into these disjoint polygons. Thus, $f^a(r) \neq f^a(s)$.
    
    Finally, we show that the image of $f^a$ is $F^a$, and that this equals the closure of $C^a$. If $q \in [0,1]$ is a dyadic rational, then as $k$ grows, $f_k^a(q)$ is eventually a contact point, and subsequently does not change. This means $f^a(q)$ is a contact point. Thus, $f^a$ maps dyadic rationals to contact points. For any $t \in [0,1]$, there is a sequence of dyadic rationals in $[0,1]$ that converges to $t$.  Thus, since $f^a$ maps dyadic rationals to contact points and $f^a$ is continuous, there is a sequence of contact points that converges to $f^a(t)$. Now consider any $p \in F^a$. By definition, $p \in F_k^a$ for all $k$, meaning that $p$ is in one of the empty polygons at iteration $k$; these empty polygons can be made as small as desired by picking large $k$. Since each empty polygon contains a contact point, this means $p$ is arbitrarily close to some contact point. Each point of $f^a([0,1])$ is arbitrarily close to some element of $C^a$, and each point of $F^a$ is arbitrarily close to some element of $C^a$. Since $f^a([0,1])$ and $F^a$ are both closed sets that include $C^a$, this means they must both equal the closure of $C^a$.
\end{proof}

\section{Relationship to the standard Koch curve construction}\label{sec:koch}
We can now prove that the gasket construction actually does give the Koch snowflake when the aspect ratio $a$ equals $1/\sqrt{3}$. We give a brief definition of the Koch snowflake that is suited to this paper's arguments.

\begin{defn}\label{defkoch}
    The \emph{Koch snowflake} curve $g : [0,1] \to \R^2$ is defined to be the limit of the curves $g_1,g_2,\dots$ that are defined inductively as in \cref{fig:koch}. Each $g_k$ is a constant-speed curve tracing a polygon centered at the origin, mapping the inputs $0,\,1/(3\cdot4^k),\,2/(3\cdot4^k),\,\dots,\,1$ to the polygon's $3\cdot4^k$ vertices. Constructing $g_{k+1}$ from $g_k$ involves replacing the middle third of each line segment of $g_k$ by two segments of the middle third's length, forming a spike that points toward the exterior of the curve. (Similarly to before, the sequence $g_1,g_2,\dots$ can be shown to be uniformly Cauchy, so it converges to a continuous $g$.)
    \begin{figure}[H]
        \centering
        \includegraphics[width=0.9\linewidth]{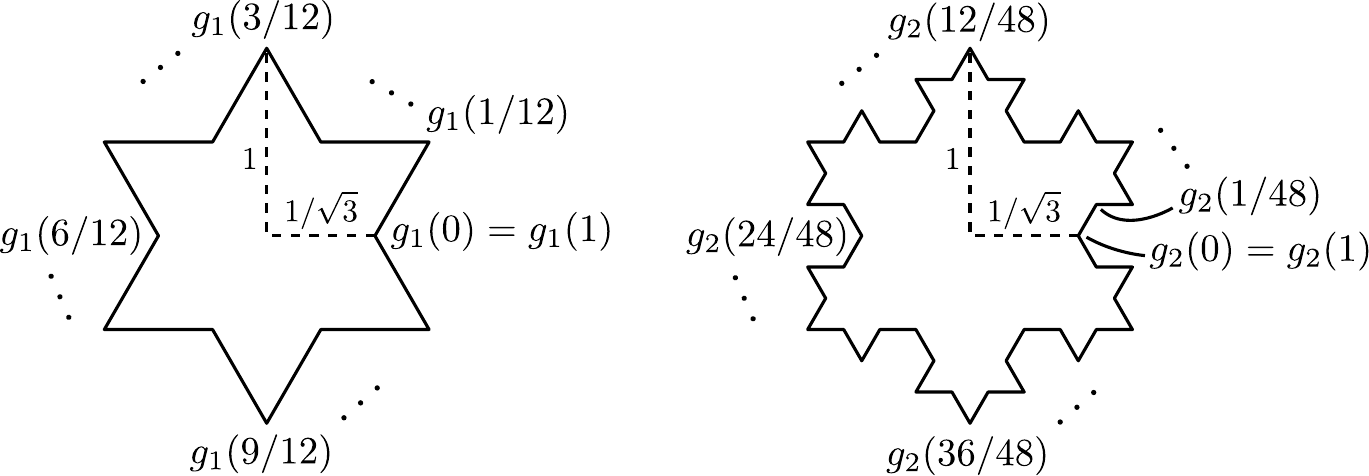}
        \caption{The curves $g_1$ and $g_2$.}
        \label{fig:koch}
    \end{figure}
\end{defn}

\begin{thm}\label{equalskoch}
    The gasket construction yields the Koch snowflake when $a = 1/\sqrt{3}$. That is, if $a = 1/\sqrt{3}$, then the set $F^a$ equals the image of $g$.
\end{thm}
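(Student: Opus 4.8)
The plan is to realize both $F^{1/\sqrt3}$ and the image of $g$ as the closure of one and the same explicitly described countable set of points, so that the theorem reduces to an identity of point sets.

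By \cref{contfamily} we already know that $F^{1/\sqrt3} = \overline{C^{1/\sqrt3}}$, the closure of the set of contact points. I would first prove the Koch-side analogue: letting $V = \bigcup_k V_k$ be the set of all vertices of all the polygonal approximants $g_k$ from \cref{defkoch}, the image of $g$ equals $\overline V$. This goes by the same argument as the last paragraph of the proof of \cref{contfamily}: the subdivision points $0,\,1/(3\cdot4^k),\,\dots,\,1$ are dense in $[0,1]$ and $g$ maps each of them to a vertex that is fixed from some stage on, so $g$ carries a dense set into $V$ and hence $\mathrm{im}(g)\sst\overline V$; conversely every vertex is a value of $g$ and $\mathrm{im}(g)$ is closed, so $\mathrm{im}(g)=\overline V$. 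After this reduction it is enough to show $\overline{C^{1/\sqrt3}}=\overline V$, and I would in fact aim at the stronger statement $C^{1/\sqrt3}=V$; taking closures then yields $F^{1/\sqrt3}=\overline{C^{1/\sqrt3}}=\overline V=\mathrm{im}(g)$.

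The heart of the argument is a single induction running the two constructions side by side, driven by the observation that $a=1/\sqrt3$ forces every rhombus in the gasket construction to be a union of two equilateral triangles and every edge ever appearing in it to have slope $0$, $\infty$, $\pm\sqrt3$, or $\pm1/\sqrt3$ --- exactly the slopes that occur in the Koch snowflake. Concretely, I would set up a dictionary between the empty polygons of the gasket construction --- the wedges and darts of \cref{types}, which realize the four-curve decomposition of \cref{fourdecomp} --- and the triangular ``gaps'' cut off between a Koch approximant $g_m$ and the limit curve $g$, and prove that each contact point is a vertex of some $g_m$ and each vertex of each $g_m$ is a contact point, i.e.\ $C^{1/\sqrt3}=V$. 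The induction hypothesis must track, at each even gasket iteration, not only which points are contact points but also how the current empty polygons are glued along the current Koch polygon, with the wedge/dart label recording whether an edge of that polygon lies along an original triangle side or on a spike. The base case is a direct check that the largest vertical rhombus inscribed in the initial horizontal rhombus, together with its two contact points, reproduces the first stage of the Koch construction applied to the two equilateral triangles that make up that rhombus. The inductive step is the geometric verification that $1/\sqrt3$ is precisely the aspect ratio at which ``insert the largest rhombus into an empty polygon'' and ``replace the middle thirds of the appropriate edges by outward-pointing spikes'' are the same operation; here the empty-polygon recursions from \cref{types} (a wedge yields three wedges and a dart, a dart yields two wedges and two darts) must be matched against the corresponding Koch subdivisions.

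The main obstacle is exactly this inductive geometric matching, and its delicacy is twofold. First, the gasket construction advances as a binary tree with two node types while the Koch construction quadrisects every edge uniformly, so the two iteration counters do not move in lockstep; one must pin down the correspondence of stages carefully --- at a given gasket iteration, different edges of the Koch polygon will have been subdivided to different depths --- and check that it creates no spurious points on either side, so that the literal equality $C^{1/\sqrt3}=V$ holds (though only equality of closures is logically needed). Second, one must carry enough combinatorial bookkeeping through the induction --- edge orientations, which contact points bound which empty polygon, the wedge/dart labelling --- for the hypothesis to be strong enough to close. Granting the point-set identity, the theorem is immediate: $F^{1/\sqrt3}=\overline{C^{1/\sqrt3}}=\overline V=\mathrm{im}(g)$.
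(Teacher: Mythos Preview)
Your overall strategy --- show that both $F^{1/\sqrt3}$ and $\mathrm{im}(g)$ equal $\overline{C^{1/\sqrt3}}$ --- is exactly the paper's. But the ``stronger statement'' $C^{1/\sqrt3}=V$ that you intend to prove is \emph{false}: only every third Koch vertex is a contact point. Concretely, the paper shows that the contact points at gasket iteration $2k$ are precisely the values $g_k(j/4^k)=g(j/4^k)$; since the vertices of $g_k$ sit at inputs $j/(3\cdot4^k)$, two thirds of them are not contact points at that stage, and they are never contact points later either, because the contact-point inputs are exactly the dyadic rationals while inputs like $1/3$ (a vertex of the initial triangle) are not dyadic. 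Since $g$ is injective on $[0,1)$, this gives $V\supsetneq C^{1/\sqrt3}$. So the inductive matching you describe cannot create ``no spurious points on either side''; it will always leave $2\cdot4^k$ Koch vertices unaccounted for.

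Your fallback to equality of closures is fine, and in fact the clean way to run it removes the complication you flagged about iteration counters. The two constructions \emph{do} move in lockstep once you use even gasket iterations: iteration $2k$ has $4^k$ empty polygons, and $g_k$ threads through them, entering and leaving each at its two contact points. The paper's induction simply records that (i) the image of $g_k$ lies inside the iteration-$2k$ empty polygons, (ii) the path $g_k$ takes through any wedge (resp.\ dart) is similar to the path through any other wedge (resp.\ dart), and (iii) $g_k$ sends $0,1/4^k,\dots,1$ onto the iteration-$2k$ contact points. Checking the wedge and dart cases once suffices for the inductive step. From (iii) you get $C^{1/\sqrt3}\subseteq\mathrm{im}(g)$; from (i) and the shrinking diameters you get $\mathrm{im}(g)\subseteq\overline{C^{1/\sqrt3}}$; together with closedness of $\mathrm{im}(g)$ this gives $\mathrm{im}(g)=\overline{C^{1/\sqrt3}}=F^{1/\sqrt3}$. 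There is no need for a separate dictionary with ``triangular gaps,'' and the wedge/dart distinction does not encode ``original side versus spike'' in the way you suggest.
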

\begin{proof}
    Suppose $a = 1/\sqrt{3}$. We prove that the image of $g$ is the closure of $C^a$. We claim the following:
    \begin{itemize}
        \item[(a)] For any $k$, the image of $g_k$ falls within the empty polygons of iteration $2k$.
        \item[(b)] For any $k,l$, the path $g_k$ takes through a wedge of iteration $2k$ is similar to the path $g_l$ takes through a wedge of iteration $2l$. The corresponding statement for darts also holds. Though these similarities may flip the direction of travel, they preserve the direction toward the exterior of the curve (i.e., the direction spikes point).
        \item[(c)] For any $k$, $g_k$ maps the inputs $0, 1/2^{2k},\dots,1$ onto the $2^{2k}$ contact points of iteration $2k$ (with $g_k(0) = g_k(1)$\,).
    \end{itemize}
    These claims can by checked by induction on $k$; it suffices to check the base case, then check the induction step for the wedge and dart (\cref{fig:kochproof}). 
    
    If $q \in [0,1]$ is a dyadic rational, then as $k$ grows, $g_k(q)$ is eventually a contact point, and subsequently does not change. This means $g(q)$ is a contact point. Thus, $g$ maps dyadic rationals to contact points. For any $t \in [0,1]$, there is a sequence of dyadic rationals in $[0,1]$ that converges to $t$.  Thus, since $g$ maps dyadic rationals to contact points and $g$ is continuous, there is a sequence of contact points that converges to $g(t)$. Each point of $g([0,1])$ is arbitrarily close to some point of $C^a$, and $g([0,1])$ is a closed set that includes $C^a$, so $g([0,1]) = \overline{C^a} = F^a$.
\end{proof}
\begin{figure}[H]
    \centering
    \includegraphics[width=0.85\linewidth]{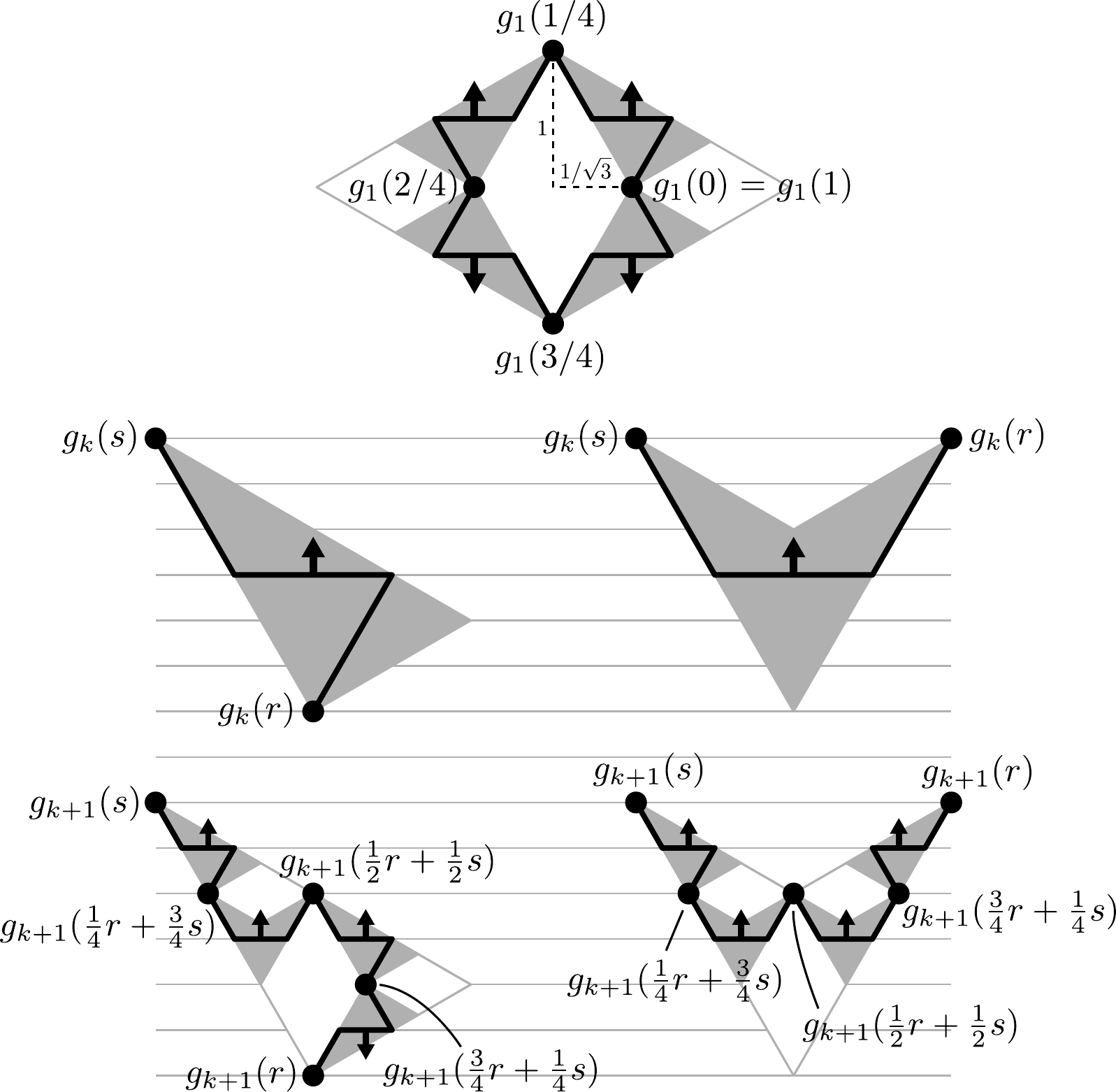}
    \caption{The base case and induction step for \cref{equalskoch}. Here $r,s$ are adjacent elements of $\{0, 1/2^{2k},\dots,1\}$, with either $r < s$ or $s < r$. The arrows represent the direction toward the exterior of the curve. The grid lines assist in checking that the claimed similarities are actually similarities.}
    \label{fig:kochproof}
\end{figure}

Using similar techniques, it can be shown that when $a = 1/\sqrt{3}$, the function $f^a$ (as defined in the proof of \cref{contfamily}) actually equals $g$.

\section{Hausdorff dimension and area}\label{sec:hausdorffdim}
Definitions \ref{similarity}, \ref{osc} and \cref{hausdorff} summarize the relevant part of Falconer (2014), pp.\ 133--140 \cite{falconer}.

\begin{defn}\label{similarity}
    A transformation $S : \R^n \to \R^n$ is a \emph{similarity} of \emph{ratio} $c > 0$ if $d(S(x),S(y)) = cd(x,y)$ for all $x,y \in \R^n$. The map $S$ is a \emph{contracting similarity} if $c < 1$.
\end{defn}

\begin{defn}\label{osc}
    A family of contracting similarities $S_1,\dots,S_m$ satisfies the \emph{open set condition} if there is a nonempty, bounded, open set $V$ such that $\bigcup_{i=1}^m S_i(V) \sst V$, with this union disjoint.
\end{defn}

\begin{thm}\label{hausdorff}
    Suppose $S_1,\dots,S_m$ is a family of contracting similarities, with ratios $c_1,\dots,c_m$ respectively. Then there is a unique nonempty compact set $F$ such that $F = \bigcup_{i=1}^m S_i(F)$. Furthermore, if $S_1,\dots,S_m$ satisfy the open set condition, then the Hausdorff dimension of $F$ is the unique $s \geq 0$ such that $c_1^s + \dots + c_m^s = 1$. (This $s$ is unique because the map $s \mapsto c_1^s + \dots + c_m^s$ is strictly decreasing. The purpose of the open set condition is to prevent the similarities from ``overlapping too much''.)
\end{thm}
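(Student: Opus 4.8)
The plan is to prove the two assertions separately: the existence and uniqueness of $F$, which uses only that the $S_i$ are contractions, and the value of $\dimH F$, for which the open set condition is needed only in the lower bound. For existence and uniqueness I would work in the space $\mathcal K$ of nonempty compact subsets of $\R^n$ with the Hausdorff metric $d_H$, which is a complete metric space. Define the Hutchinson operator $\mathcal S(E) = \bigcup_{i=1}^m S_i(E)$. Since $d(S_i(x),S_i(y)) = c_i\,d(x,y)$, a short estimate gives $d_H(\mathcal S(E),\mathcal S(E')) \le (\max_i c_i)\,d_H(E,E')$, and $\max_i c_i < 1$. The contraction mapping theorem then yields a unique $F \in \mathcal K$ with $\mathcal S(F) = F$, i.e.\ $F = \bigcup_i S_i(F)$. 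I would also record here that if $V$ is any nonempty bounded set with $\bigcup_i S_i(V) \sst V$, then $\mathcal S(\overline V) \sst \overline V$ (finite unions commute with closure), so $\mathcal S^k(\overline V) \sst \overline V$ for all $k$ while $\mathcal S^k(\overline V) \to F$; hence $F \sst \overline V$. This will be used in the lower bound.

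For the dimension, let $s$ be the unique solution of $\sum_i c_i^s = 1$ (unique since $s \mapsto \sum_i c_i^s$ is continuous and strictly decreasing, equal to $m \ge 1$ at $s = 0$ and tending to $0$ as $s \to \infty$). For the upper bound $\dimH F \le s$, iterate self-similarity: for a word $\mathbf i = (i_1,\dots,i_k)$ put $S_{\mathbf i} = S_{i_1}\circ\cdots\circ S_{i_k}$ and $c_{\mathbf i} = c_{i_1}\cdots c_{i_k}$, so $F = \bigcup_{|\mathbf i| = k} S_{\mathbf i}(F)$ and $\operatorname{diam} S_{\mathbf i}(F) = c_{\mathbf i}\operatorname{diam} F$. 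This is a cover of $F$ with $\sum_{|\mathbf i| = k} (\operatorname{diam} S_{\mathbf i}(F))^s = (\operatorname{diam} F)^s\big(\sum_i c_i^s\big)^k = (\operatorname{diam} F)^s$, a bound independent of $k$, while the mesh $(\max_i c_i)^k\operatorname{diam} F$ tends to $0$. Hence $\mathcal H^s(F) < \infty$ and $\dimH F \le s$; the open set condition played no role here.

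The lower bound $\dimH F \ge s$ is the crux. I would apply the mass distribution principle: it suffices to build a Borel probability measure $\mu$ supported on $F$ and a constant $C$ with $\mu(B(x,r)) \le C r^s$ for all $x$ and all small $r > 0$, for then $\mathcal H^s(F) > 0$. Take $\mu$ to be the self-similar measure, the unique probability measure with $\mu = \sum_i c_i^s\,(S_i)_\#\mu$ (existence by a second contraction argument, now on probability measures with a suitable metric), which satisfies $\mu(S_{\mathbf i}(F)) = c_{\mathbf i}^s$ for every word $\mathbf i$. Fix the open set $V$ from the open set condition, choose balls $B(z_1,\rho_1) \sst V \sst B(z_2,\rho_2)$, and given $x$ and $0 < r < 1$ let $\mathcal Q$ be the (finite) antichain of words $\mathbf i$ minimal subject to $c_{\mathbf i} \le r$, so also $c_{\mathbf i} > (\min_j c_j)\,r$. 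By the open set condition the sets $\{S_{\mathbf i}(V)\}_{\mathbf i \in \mathcal Q}$ are pairwise disjoint, each containing a ball of radius $\ge (\min_j c_j)\rho_1 r$ and, if it meets the fixed ball $B(x,r)$, contained in a ball about $x$ of radius $O(r)$ (using $\operatorname{diam} S_{\mathbf i}(\overline V) \le r\operatorname{diam}\overline V$). A volume comparison in $\R^n$ then bounds the number of $\mathbf i \in \mathcal Q$ with $S_{\mathbf i}(\overline V) \cap B(x,r) \ne \emptyset$ by a constant $N = N(n,\min_j c_j,\rho_1,\operatorname{diam}\overline V)$. Since $F \sst \overline V$ and $\{S_{\mathbf i}(F)\}_{\mathbf i \in \mathcal Q}$ covers $F$, we get $\mu(B(x,r)) \le \sum_{\mathbf i \in \mathcal Q,\ S_{\mathbf i}(F)\cap B(x,r)\ne\emptyset} c_{\mathbf i}^s \le N r^s$, and the mass distribution principle gives $\dimH F \ge s$.

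I expect the packing step in the lower bound to be the main obstacle: checking carefully that the disjoint open sets $S_{\mathbf i}(V)$ attached to a scale‑$r$ antichain each contain a ball of radius comparable to $r$ and sit inside a ball of radius comparable to $r$, so that a volume (bounded‑overlap) argument caps how many can intersect $B(x,r)$, together with the bookkeeping that every word whose piece meets $B(x,r)$ refines a member of the antichain. Everything else is either standard fixed‑point theory or the routine diameter estimate for the natural covers.
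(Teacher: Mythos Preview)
Your proposal is correct and is precisely the standard Hutchinson--Falconer proof: contraction of the Hutchinson operator on $(\mathcal K,d_H)$ for existence/uniqueness, the natural iterated covers for $\mathcal H^s(F)<\infty$, and the self-similar measure together with a volume/packing count under the open set condition for $\mathcal H^s(F)>0$ via the mass distribution principle.

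Note, however, that the paper does not give its own proof of this theorem. It is stated as a background result summarizing Falconer (2014), pp.~133--140, and the accompanying remark explicitly defers the hard part (showing $0<\mathcal H^s(F)<\infty$) to \cite{vde}. So there is nothing to compare against beyond observing that what you wrote is exactly the argument those references contain.
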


We compute the Hausdorff dimension and interior area of the curve using the self-similarity relationship in \cref{fig:similarities}. (Note that we are no longer just considering the even-numbered iterations.) Let $E^a$ represent the upper-right quadrant of $F^a$ (including the endpoints). The figure gives similarities $S_1,S_2,S_3$ with ratios $a,\,(1-a^2)/2,\,(1-a^2)/2$ respectively, where the nonempty compact set $E^a$ satisfies $E^a = \bigcup_{i=1}^3 S_i(E^a)$. These similariti\textbf{e}s also satisfy the open set condition by choosing $V$ to be the (open, filled) triangle $ABC$. Thus, the Hausdorff dimension of $E^a$ (which is the Hausdorff dimension of the whole curve) is the unique $s$ such that
\begin{equation}\label{gasketdim}
    a^s + \left(\frac{1-a^2}{2}\right)^s + \left(\frac{1-a^2}{2}\right)^s = a^s + 2\left(\frac{1-a^2}{2}\right)^s = 1\tp
\end{equation}
\begin{figure}[H]
    \centering
    \includegraphics[width=0.55\linewidth]{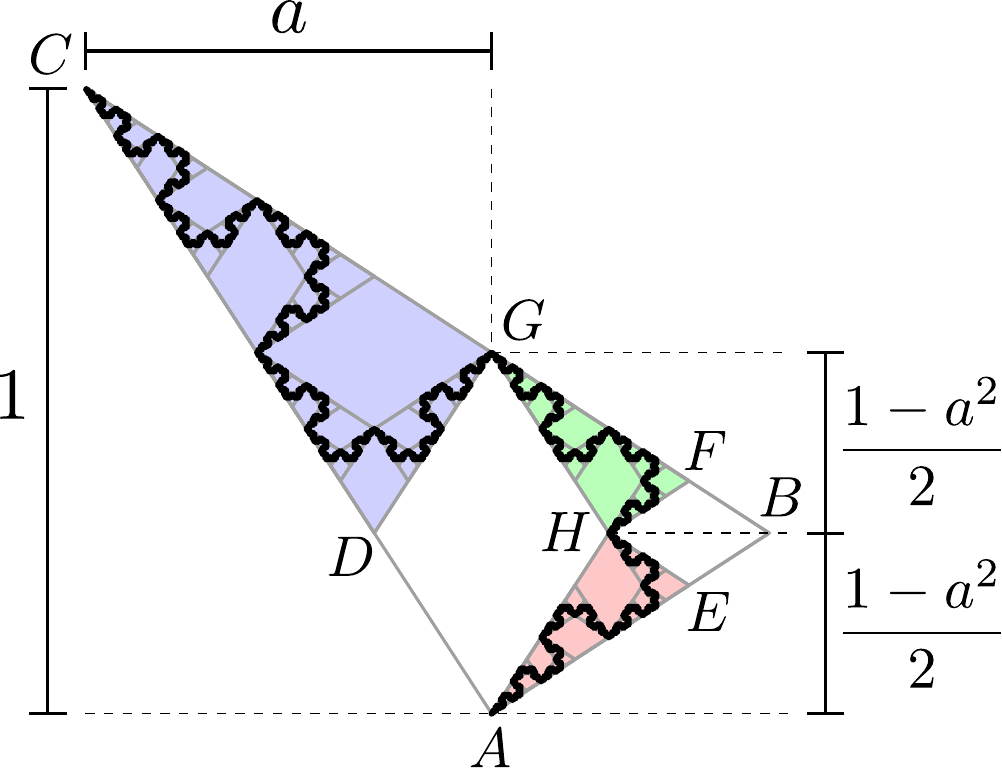}
    \caption{The set $E^a$ partitioned into three parts similar to the whole. The map $S_1$ takes $A,B,C$ to $G,D,C$ respectively, $S_2$ takes $A,B,C$ to $H,F,G$ respectively, and $S_3$ takes $A,B,C$ to $H,E,A$ respectively.}
    \label{fig:similarities}
\end{figure}

\begin{rmk}[Background on Hausdorff dimension]\label{rmk:hausdorff}
    The Hausdorff dimension of an arbitrary set $F \sst \R^n$ is defined using the $s$-dimensional Hausdorff \emph{measure} of $F$, denoted $\mathcal{H}^s(F)$ ($s \geq 0$). Hausdorff measure has the following properties:
    \begin{itemize}
        \item[(a)] If $S$ is a similarity of ratio $c$, then $\mathcal{H}^s(S(F)) = c^s\,\mathcal{H}^s(F)$ for all $s$.
        \item[(b)] There is a unique $s$ where $\mathcal{H}^s(F)$ jumps from $\infty$ to 0. That is, there is $s$ such that for all $t < s$, $\mathcal{H}^t(F) = \infty$, and for all $t > s$, $\mathcal{H}^t(F) = 0$. This $s$ is called the \emph{Hausdorff dimension} of $F$, denoted $\dimH F$. At this $s$, $\mathcal{H}^s(F)$ can take any value in $[0,\infty]$.
    \end{itemize}
    If it were always the case that, if $s = \dimH F$, then $0 < \mathcal{H}^s(F) < \infty$, then \cref{hausdorff} would follow easily for self-similar fractals. For example, the similarities in \cref{fig:similarities} show that
    \[\mathcal{H}^s(E^a) = \mathcal{H}^s(S_1(E^a)) + \mathcal{H}^s(S_2(E^a)) + \mathcal{H}^s(S_3(E^a))\]
    \[= a^s\, \mathcal{H}^s(E^a) + 2\left(\frac{1-a^2}{2}\right)^s \mathcal{H}^s(E^a)\tc\]
    which gives \cref{gasketdim} after dividing the equation by $\mathcal{H}^s(E^a)$. However, it is not true that for all sets $F$, there exists an $s$ such that $0 < \mathcal{H}^s(F) < \infty$. The difficult part of proving \cref{hausdorff} is showing that $0 < \mathcal{H}^s(F) < \infty$ when the hypotheses of the theorem hold. For a definition of Hausdorff measure and a proof of this theorem, see \cite{vde} pp.\ 11, 23--32.
\end{rmk}

\vspace{6pt}
Equation \cref{gasketdim} can be used to numerically plot $\dimH F^a$ as a function of $a$ (\cref{fig:plot}). The Hausdorff dimension of the curve takes its maximum value at $a = 1/\sqrt{3}$, when the gasket construction gives the original Koch snowflake.
\begin{figure}[H]
    \centering
    \includegraphics[width=0.75\linewidth]{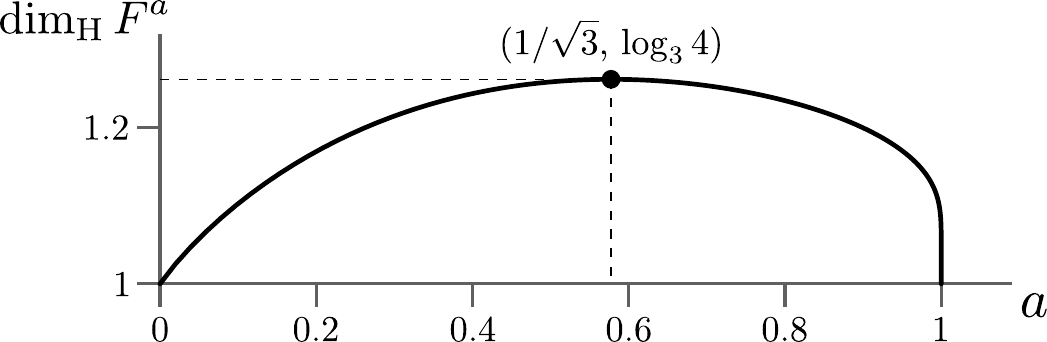}
    \caption{The plot of $\dimH F^a$ as a function of $a$.}
    \label{fig:plot}
\end{figure}

\begin{prop}\label{gasketdimmax}
    The Hausdorff dimension $\dimH F^a$ is maximized when $a = 1/\sqrt{3}$, attaining the value $\log_3 4$. Furthermore, $\lim\limits_{a\searrow0} \dimH F^a = \lim\limits_{a\nearrow1} \dimH F^a = 1$.
\end{prop}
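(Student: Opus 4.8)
The plan is to analyze the implicit equation \eqref{gasketdim}. Write $\phi(a,s) = a^s + 2\left(\frac{1-a^2}{2}\right)^s$. For each fixed $a \in (0,1)$ both bases $a$ and $\frac{1-a^2}{2}$ lie in $(0,1)$, so $\phi(a,\cdot)$ is continuous and strictly decreasing, running from $\phi(a,0) = 3$ down to $0$; hence it has a unique zero $s(a)$, and $s(a) = \dimH F^a$ by \cref{hausdorff} and \eqref{gasketdim}. The decisive observation is that, by strict monotonicity of $\phi(a,\cdot)$, one has $s(a) \le s_0$ if and only if $\phi(a, s_0) \le 1$, and $s(a) = s_0$ iff $\phi(a,s_0) = 1$. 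So, setting $s_0 := \log_3 4$, the maximality assertion reduces to the inequality $g(a) := \phi(a,s_0) \le 1$ for all $a \in (0,1)$, with equality exactly at $a = 1/\sqrt3$.

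First I would pin down the value at $a = 1/\sqrt3$. There $\frac{1-a^2}{2} = \frac13$, so \eqref{gasketdim} becomes $3^{-s/2} + 2\cdot 3^{-s} = 1$; substituting $y = 3^{-s/2}$ yields $2y^2 + y - 1 = 0$, whose unique positive root is $y = \frac12$, giving $s = 2\log_3 2 = \log_3 4 = s_0$. In particular $g(1/\sqrt3) = 1$ and $\dimH F^{1/\sqrt3} = \log_3 4$.

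The core of the proof is the inequality $g \le 1$, which I would establish by a sign analysis of $g'$. A direct computation gives $g'(a) = s_0 a\left[a^{s_0-2} - 2^{2-s_0}(1-a^2)^{s_0-1}\right]$, so for $a \in (0,1)$ the sign of $g'(a)$ equals that of $P(u) - c_0$ after the substitution $u = a^2$, where $P(u) = \frac{s_0-2}{2}\ln u - (s_0-1)\ln(1-u)$ and $c_0 = (2-s_0)\ln 2$. Since $1 < s_0 < 2$, one computes $P'(u) = \frac{s_0 u + (s_0-2)}{2u(1-u)}$, which vanishes only at $u^* = \frac{2-s_0}{s_0} \in (0,1)$; thus $P$ is strictly decreasing on $(0,u^*)$ and strictly increasing on $(u^*,1)$, with $P \to +\infty$ at both endpoints, so the equation $P(u) = c_0$ has at most two solutions. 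One solution is $u = \frac13$ (corresponding to the known critical point $a = 1/\sqrt3$), and since $s_0 < \frac32$ forces $\frac13 < u^*$, the value $c_0 = P(\frac13)$ strictly exceeds the minimum $P(u^*)$; hence there are exactly two solutions, $u = \frac13$ and some $u_1 \in (u^*,1)$. Tracking signs, $g$ is increasing on $(0, 1/\sqrt3)$, decreasing on $(1/\sqrt3, \sqrt{u_1})$, and increasing on $(\sqrt{u_1}, 1)$. Therefore $g(a) < g(1/\sqrt3) = 1$ for $a \in (0,\sqrt{u_1}) \setminus \{1/\sqrt3\}$; and on $(\sqrt{u_1},1)$, $g$ is increasing with $\lim_{a\to1^-}g(a) = 1$, so $g < 1$ there too. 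This yields $g \le 1$ with equality only at $1/\sqrt3$, hence $\dimH F^a = s(a) \le \log_3 4$ with equality iff $a = 1/\sqrt3$. The step I expect to be the main obstacle is exactly this one: because $g(a) \to 1$ as $a \to 1$, the bound $g \le 1$ cannot be obtained from concavity or from $g$ having a single critical point -- one genuinely needs the ``two critical points, sign pattern $+,-,+$'' picture above together with the limiting value of $g$ at $a = 1$.

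For the limits, I would first note $\phi(a,1) = a + (1-a^2) = 1 + a(1-a) > 1$ for $a \in (0,1)$, so strict monotonicity gives $s(a) > 1$ for all $a$. For a matching upper bound, fix $\eta \in (0,1)$. Near $a = 0$, $\phi(a,1+\eta) \le a^{1+\eta} + 2(1/2)^{1+\eta} = a^{1+\eta} + 2^{-\eta}$, which is $< 1$ as soon as $a^{1+\eta} < 1 - 2^{-\eta}$. Near $a = 1$, writing $a = 1-\delta$ and using $(1-\delta)^{1+\eta} \le 1-\delta$ together with $\frac{1-(1-\delta)^2}{2} \le \delta$, one gets $\phi(1-\delta, 1+\eta) \le 1 - \delta(1 - 2\delta^\eta)$, which is $< 1$ once $\delta < 2^{-1/\eta}$. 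In both regimes $1 < s(a) < 1 + \eta$, and since $\eta$ was arbitrary, $\lim_{a\searrow0}\dimH F^a = \lim_{a\nearrow1}\dimH F^a = 1$.
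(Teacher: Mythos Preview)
Your proof is correct and in fact considerably more complete than the paper's own. The paper merely checks that $a=1/\sqrt3$, $s=\log_34$ satisfy \eqref{gasketdim}, observes via implicit differentiation that $\mathrm{d}s/\mathrm{d}a=0$ and $\mathrm{d}^2s/\mathrm{d}a^2<0$ there, and then explicitly \emph{omits} both the passage from local to global maximum and the two limit statements.

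Your route is genuinely different. Instead of implicitly differentiating $s(a)$, you freeze $s$ at the target value $s_0=\log_34$ and use the monotonicity of $\phi(a,\cdot)$ to convert ``$s(a)\le s_0$'' into the explicit one-variable inequality $g(a)=\phi(a,s_0)\le 1$. You then analyse $g$ directly: the substitution $u=a^2$ and the function $P$ reduce locating the critical points of $g$ to solving $P(u)=c_0$, whose convex shape (decreasing then increasing, blowing up at both ends) pins down exactly two critical points, giving the sign pattern $+,-,+$ for $g'$. The key subtlety you flag---that $g(a)\to 1$ as $a\nearrow 1$, so the inequality is sharp at the right endpoint and cannot follow from a single-critical-point argument---is handled cleanly by combining the increase on $(\sqrt{u_1},1)$ with that limiting value. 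This approach has the advantage of being entirely self-contained and global from the start, whereas the paper's second-derivative test only yields local information and leaves the hard part (globality and the limits) unaddressed. Your treatment of the limits, sandwiching $s(a)$ between $1$ and $1+\eta$ via the explicit bounds on $\phi(a,1)$ and $\phi(a,1+\eta)$, is also clean and fills in what the paper skips.
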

\begin{proof}
    The values $a = 1/\sqrt{3}$, $s = \log_3 4$ satisfy the equation $a^s + 2\left(\frac{1-a^2}{2}\right)^s = 1$. By applying implicit differentiation to this equation, it can be seen that $\dfrac{\mathrm{d}s}{\mathrm{d}a} = 0$ and $\dfrac{\mathrm{d}^2s}{\mathrm{d}a^2} < 0$ at $a = 1/\sqrt{3}$, $s = \log_3 4$, which shows that this point is at least a local maximum. We omit the rest of the proof.
\end{proof}

Though $\lim\limits_{a\nearrow1} \dimH F^a = 1$, this convergence is very slow. For example, when $a = 1-10^{-16}$, $\dimH F^a$ is approximately $1.018$. This means that the curve retains significant fractal ``roughness'' even when it is, distance-wise, extremely close to the degenerate curve with $a = 1$ (which is just a square).

\begin{prop}\label{curvearea}
    The region enclosed by the curve $F^a$ has area $\dfrac{8a}{1+4a^2-a^4}$.
\end{prop}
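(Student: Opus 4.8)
The plan is to use the rectangular symmetry of the construction to cut the problem down to the first quadrant, and then feed the self-similarity of \cref{fig:similarities} into a single linear equation for the enclosed area. Since $F^a$ is a Jordan curve (\cref{contfamily}), let $\Omega^a$ denote the closed region it bounds. Every rhombus in the construction is centered symmetrically about both coordinate axes and the insertion rule commutes with the two reflections $x \mapsto -x$ and $y \mapsto -y$, so $F^a$, hence $\Omega^a$, is invariant under both, giving $|\Omega^a| = 4\,|\Omega^a \cap \{x \ge 0,\, y \ge 0\}|$. The same contact-point bookkeeping used in \cref{contfamily} shows that $F^a$ meets the coordinate axes exactly at the four vertices $(0,\pm1),(\pm a,0)$ of the rhombus $R_1$ inserted at iteration~1, and that the first-quadrant arc of $F^a$ — which is the arc $E^a$ of \cref{sec:hausdorffdim} — has endpoints $(0,1)$ and $(a,0)$, the edge of $R_1$ joining them serving as the chord of $E^a$. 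Each of the four congruent sub-arcs of $F^a$ (as in \cref{fourdecomp}) is confined to the iteration-2 empty polygon adjacent to one edge of $R_1$; these four empty polygons are pairwise disjoint and lie outside $R_1$, so the curve bulges outward from every edge of $R_1$ without the bulges overlapping. Hence $R_1 \subseteq \Omega^a$ and
\[ |\Omega^a| \;=\; |R_1| + 4\,|R| \;=\; 2a + 4\,|R|, \]
where $|R|$ is the area of the region $R$ caught between $E^a$ and its chord; it remains to compute $|R|$.

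For this I would use the decomposition $E^a = S_1(E^a) \cup S_2(E^a) \cup S_3(E^a)$ of \cref{fig:similarities}, whose ratios are $a$, $(1-a^2)/2$, $(1-a^2)/2$. Read from $(0,1)$ to $(a,0)$, the three sub-arcs occur in order and are glued at two intermediate points ($G$ and $H$ in the figure's labels), so the polygonal path through those points cuts $R$ into a central piece — bounded by that path, by the chord of $E^a$, and by whatever inserted rhombi the path severs — together with the three scaled copies $S_i(R)$. Taking areas gives a linear equation for $|R|$, but orientations must be handled with care: by \cref{types}(b) two of $S_1,S_2,S_3$ are reflections in a coordinate axis, and such a reflection flips the side of the sub-chord toward which the corresponding sub-arc bulges, so that copy is carved \emph{out of} the central piece rather than appended to it and so enters the equation with a minus sign. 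Inserting the coordinates of the points $A,\dots,H$ of \cref{fig:similarities}, together with the edge data recorded in \cref{types} and \cref{bounds}, turns this into an identity $|R| = p(a) + m(a)\,|R|$ with $p$ and $m$ explicit rational functions of $a$, whence $|R| = p(a)/(1-m(a))$.

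Substituting back, $|\Omega^a| = 2a + 4|R|$ simplifies to $\dfrac{8a}{1+4a^2-a^4}$; as a check, at $a = 1/\sqrt{3}$ this gives $\dfrac{6\sqrt{3}}{5}$, namely $\tfrac{8}{5}$ times the area of the equilateral triangle spanned by the construction — the classical Koch-snowflake value. The step I expect to be the real obstacle is the bookkeeping just described: locating the intermediate points $G$ and $H$, deciding exactly which inserted rhombi fall inside $\Omega^a$ so that the central piece is described correctly, and — most delicately — fixing the three signs. It is precisely the loss of six-fold symmetry that forces two of the similarities to reverse orientation, and it is those reflection-induced sign corrections, rather than a naive ``area $=$ central polygon $+$ sum of scaled copies'' recursion, that produce the denominator $1+4a^2-a^4$. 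One should also confirm that the central piece together with the three signed copies exactly accounts for $R$ with nothing left over, which again follows from the consistent outward-bulging of the construction.
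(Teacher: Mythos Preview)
Your overall plan coincides with the paper's: write the enclosed area as $2a+4x$ where $x$ is the area trapped between the quarter-arc $E^a$ and its chord, then extract a single linear equation for $x$ from the three-piece self-similarity of \cref{fig:similarities}. The paper's version of that equation is
\[
x \;=\; \operatorname{area}(AHGD)\;+\;\bigl[\operatorname{area}(GDC)-a^2x\bigr]\;+\;2\Bigl(\tfrac{1-a^2}{2}\Bigr)^{\!2}x,
\]
so the coefficient of $x$ on the right is $-a^2+\tfrac12(1-a^2)^2$ and $1-m(a)=\tfrac12(1+4a^2-a^4)$, which is where the denominator comes from.

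The concrete error is in your sign bookkeeping. You claim that the two orientation-reversing $S_i$ each force a minus sign; in fact exactly \emph{one} of the three maps --- $S_1$, the one of ratio $a$ --- sends the interior side of $E^a$ to the exterior side of $S_1(E^a)$. The relevant criterion is not ``$S_i$ is a reflection'' but ``$S_i$ carries the $\Omega^a$-interior side of the arc to the $\Omega^a$-interior side of the sub-arc'', and this depends on the planar orientation of $S_i$ \emph{together with} whether $S_i$ reverses the direction of travel along $F^a$. One of your two reflections also reverses that direction, and the two effects cancel, so its copy of $R$ is appended, not carved out. (Your appeal to \cref{types}(b) is also misplaced: that lemma governs similarities between \emph{even} iterations, whereas $S_1,S_2,S_3$ step by a single iteration and carry a $90^\circ$ turn, so none of them is literally a reflection in a coordinate axis.) With two minus signs you would obtain $1-m(a)=1+a^2$ or $\tfrac12(3-4a^2+a^4)$, neither of which yields $1+4a^2-a^4$; the answer you quote at the end only emerges once the signs are set to $(-,+,+)$.
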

\begin{proof}
    Since $\dimH F^a < 2$, the 2-dimensional Hausdorff measure of $F^a$ is zero. 2D Hausdorff measure is proportional to 2D Lebesgue measure (\cite{falconer} pp.\ 45--46), so the curve itself has zero area. Let $x$ be the area of the shaded part of $ABC$ (\cref{fig:area}). Partition this region into four subregions: the rhombus $AHGD$ and the shaded parts of $GDC$, $HFG$, and $HEA$. The similarities $S_1,S_2,S_3$ from before give us the areas of these subregions in terms of $x$. Note that $S_1$ (mapping $ABC$ to $GDC$) maps the shaded part of $ABC$ to the \emph{unshaded} part of $GDC$. Writing $x$ as the sum of the areas of these subregions gives the equation
    \[x = \operatorname{area}(AHGD) + \big[\!\operatorname{area}(GDC) - a^2x\big] + 2\left(\frac{1-a^2}{2}\right)^2x\]
    \[x = \frac{a(1-a^2)^2}{2} + \left[\frac{a(1-a^4)}{4} - a^2x\right] + 2\left(\frac{1-a^2}{2}\right)^2x\tp\]
    Solving this equation for $x$ gives $x = \dfrac{a(3-4a^2+a^4)}{2(1 + 4a^2 - a^4)}$. Then the area of the whole region enclosed by the curve is
    \[\operatorname{area}(ACIJ) + 4x = 2a + 4\frac{a(3-4a^2+a^4)}{2(1 + 4a^2 - a^4)} = \frac{8a}{1+4a^2-a^4}\tp\]
\end{proof}
\begin{figure}[H]
    \centering
    \includegraphics[width=0.9\linewidth]{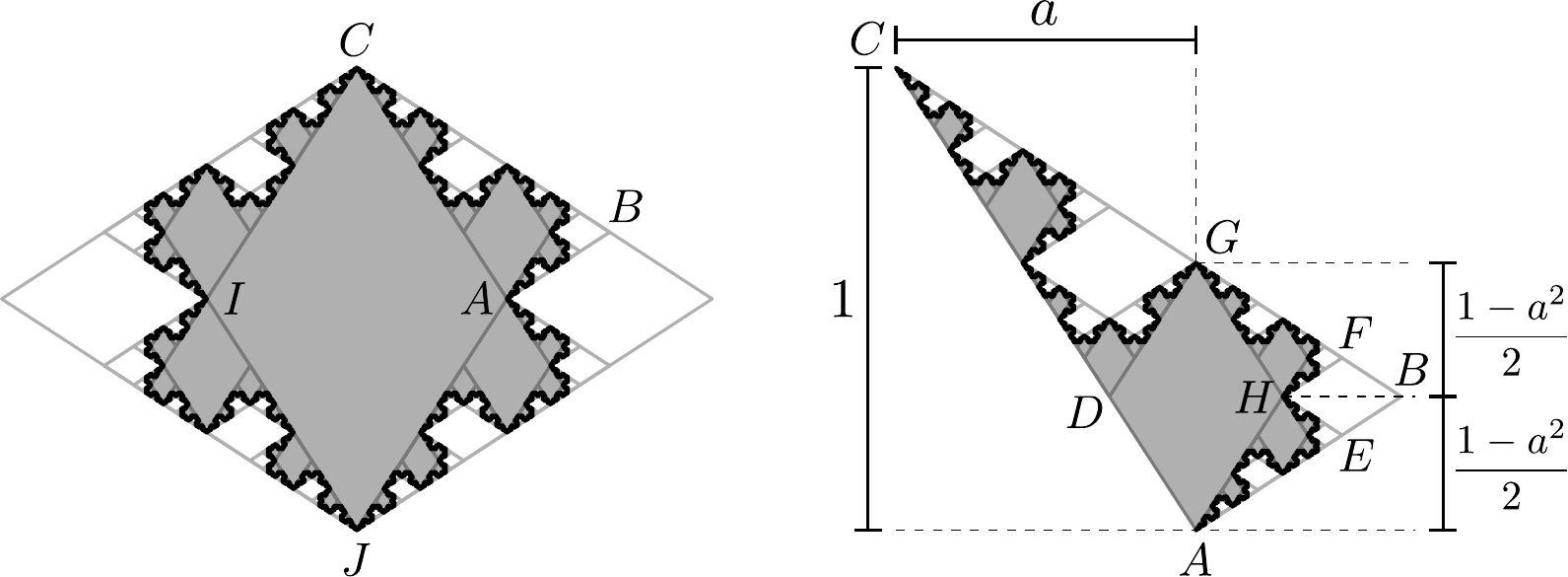}
    \caption{The region enclosed by the curve $F^a$.}
    \label{fig:area}
\end{figure}

The area attains its maximum at $a = 1/\sqrt{3}$, though this is an artifact of the arbitrary decision to keep the height of the construction constant (as opposed to the width or some other quantity).

\end{document}